\documentclass[conference]{IEEEtran}
\IEEEoverridecommandlockouts
\usepackage{cite}
\usepackage{amsmath,amssymb,amsfonts}
\usepackage{algorithmic}
\usepackage{graphicx}
\usepackage{booktabs}
\usepackage{url}
\usepackage{flushend}
\newtheorem{proposition}{Proposition}
\newcounter{algorithm}
\newcommand{\B}{\mathbb{B}}
\newcommand{\R}{\mathbb{R}}
\newcommand{\norm}[1]{\left\lVert#1\right\rVert}
\begin{document}
\title{Time-Efficient Active Bearing-Only Localization with Reception and Coverage Guarantees}
\author{\IEEEauthorblockN{Ao Xiao\textsuperscript{*}, Fangfang Zhou, and Qiteng Guo\thanks{The authors are with the College of Mathematics and Statistics,\protect\newline Northwest Normal University, Lanzhou, China.\protect\newline \textsuperscript{*}Corresponding author: Ao Xiao (\texttt{xiaoao2math@163.com}).}}}
\maketitle

\begin{abstract}
We consider the localization and removal of a stationary omnidirectional radio source using a mobile sensor with bounded bearing errors and an unknown reception radius within known bounds. The objective is to minimize expected remaining action time under a fixed continuation policy while guaranteeing reception and source removal under the sensing model. A three-disk filter ensures reception, and a minimum enclosing circle certifies coverage of all bearing-consistent source positions. Monte Carlo sample-average search ranks a finite candidate set by complete mission time, including travel, readings, terminal service, and a finite fallback scan that ensures completion under the stated assumptions. In the reference case, 5000 independent validation missions average 60.40 s (approximate 95\% confidence interval: [59.96, 60.84] s), saving 19.39\% relative to a prescribed lateral point and 0.79\% relative to a short-travel point. All 25,000 selected-point validation missions across five initial geometries succeed, with lower mean completion times than both prescribed designs. A same-grid objective ablation shows that minimizing travel or reading count alone can increase mean completion time.
\end{abstract}
\begin{IEEEkeywords}
bearing-only localization, bounded uncertainty, active sensing, reception constraints, minimum enclosing circle
\end{IEEEkeywords}

\section{Introduction}
A single bearing leaves source range uncertain. A lateral move improves triangulation but costs time and may exceed reception range. Selecting the next observation must therefore balance travel, reception, and coverage at the final service point.

Tokekar \emph{et al.} studied active robotic radio telemetry \cite{telemetry}; Vander Hook \emph{et al.} included travel and measurement time in cautious greedy localization \cite{cautious}. Fisher-information analyses characterize favorable statistical estimation geometries \cite{bishop,zhao}. For bounded angular errors, Tokekar and Isler derived worst-case area and diameter guarantees from wedge intersections \cite{bounded}. Pozzan \emph{et al.} combined weighted least squares with covariance-driven active sensing under Gaussian noise \cite{active}. Our reception and coverage criteria differ from those formulations.

We study one stationary omnidirectional source after detection, using Problem B of the 2026 Contemporary Undergraduate Mathematical Contest in Modeling (CUMCM) \cite{benchmark}. A related preprint's abstract describes whole-region search and completion certification for multiple emitters with unknown directionality \cite{cao}. We instead focus on the reception-constrained second observation, treating the unknown reception radius as a bounded physical parameter.

We combine a derived reception-safe region, an enclosing-circle stopping rule, and sample-average selection under fixed continuation. The geometric guarantees require no source distribution; the assumed distribution ranks feasible points by expected completion time. We evaluate this integration in five geometries, cost sensitivity, and a selection-objective ablation.

\section{Sensing Model and Geometric Foundation}
\subsection{Bounded Bearings and Reception}
Let the source location be $G\in\mathcal D=\B(0,a)\subset\R^2$, where $\B(C,r)$ denotes a closed disk. Observation positions $A_i$ are accurately known and may lie outside $\mathcal D$. Motion is unobstructed at constant speed $v_{\mathrm{m}}$. The source is stationary and omnidirectional, with a fixed but unknown reception radius
\begin{equation}
 R_{\min}\leq R\leq R_{\max}.
 \label{eq:range}
\end{equation}
A normal observation at distance $d_i=\norm{G-A_i}$ is available when $r_{\mathrm{near}}<d_i\leq R$. At $d_i\leq r_{\mathrm{near}}$, a close-range flag permits terminal service at $A_i$. Otherwise no signal is received. A normal bearing satisfies
\begin{equation}
 \begin{aligned}
 \theta_i&=\operatorname{atan2}(G_y-A_{iy},G_x-A_{ix})+\epsilon_i,\\
 |\epsilon_i|&\leq\alpha,\qquad 0<\alpha<\pi/2,
 \end{aligned}
 \label{eq:bearing}
\end{equation}
with angles interpreted modulo $2\pi$. The guarantees require only the bound. For numerical expectations, errors at distinct locations are independent uniform variables on $[-\alpha,\alpha]$; repeated locations reuse the original bearing.

\subsection{Wedge Intersection}
Define the local orthonormal basis
\begin{equation}
 e_i=(\cos\theta_i,\sin\theta_i)^T,\qquad
 n_i=(-\sin\theta_i,\cos\theta_i)^T.
\end{equation}
For a candidate source $P$, write $s_i(P)=(P-A_i)^Te_i$ and $t_i(P)=(P-A_i)^Tn_i$. The positions consistent with this reading form the wedge
\begin{equation}
 W_i=\{P:s_i(P)\geq0,\ |t_i(P)|\leq s_i(P)\tan\alpha\}.
 \label{eq:wedge}
\end{equation}
Since $\alpha>0$, the two lateral inequalities imply $s_i\geq0$. Let $\mathcal I_n$ index the retained normal bearings among the first $n$ attempted readings. Then $\Omega_n=\bigcap_{i\in\mathcal I_n}W_i$ is an intersection of $2|\mathcal I_n|$ half-planes. We retain feasible nonparallel boundary intersections, remove duplicates, and check unbounded directions. If $\Omega_n$ is nonempty and bounded, its vertex set $V_n$ satisfies $\Omega_n=\operatorname{conv}(V_n)$ and
\begin{equation}
 D_n=\max_{p,q\in V_n}\norm{p-q}.
 \label{eq:diameter}
\end{equation}
This follows from convexity and the triangle inequality.

\begin{figure}[t]
 \centering\includegraphics[width=0.80\columnwidth]{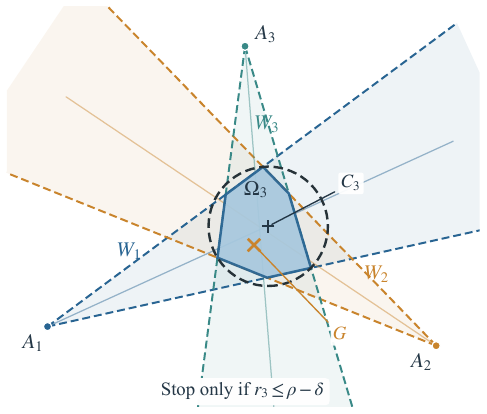}
 \caption{Bearing wedges $W_i$ intersect in $\Omega_3$. The dashed black circle encloses the whole intersection, with center $C_3$ and radius $r_3$; a final approach to its center requires $r_3\leq\rho-\delta$. The display half-angle is $12^\circ$ for visibility, rather than the experimental $1^\circ$.}
 \label{fig:wedges}
\end{figure}

\subsection{A Certified Final Approach}
The terminal operation requires the source to lie within a radius $\rho$ of the arrival point. Define
\begin{equation}
 \begin{aligned}
 r_n&=\min_C\max_{P\in\Omega_n}\norm{P-C},\\
 C_n&\in\arg\min_C\max_{P\in\Omega_n}\norm{P-C}.
 \end{aligned}
 \label{eq:mec}
\end{equation}
For bounded $\Omega_n$, it suffices to enclose $V_n$. A minimum enclosing circle is supported by at most three vertices \cite{welzl}. We test pair-diameter circles and noncollinear triple circumcircles, selecting the smallest that encloses every vertex. A singleton has radius zero; an unbounded region has $r_n=\infty$.

The condition $D_n\leq2\rho$ is necessary for coverage by a disk of radius $\rho$, but is not sufficient. For a nondegenerate acute triangle, the minimum enclosing circle is its circumcircle. If $D$ is the longest side and $\gamma<\pi/2$ its opposite angle, the enclosing radius is $D/(2\sin\gamma)>D/2$. A triangle scaled to $D=2\rho$ therefore passes the diameter test but cannot be covered by a radius-$\rho$ disk.

We instead stop normal sensing when
\begin{equation}
 r_n\leq\rho-\delta,
 \label{eq:stop}
\end{equation}
and move to $C_n$, as illustrated in Fig.~\ref{fig:wedges}. Indeed,
\begin{equation}
 G\in\Omega_n\subseteq\B(C_n,r_n)
 \ \Longrightarrow\ \norm{G-C_n}\leq r_n\leq\rho-\delta.
 \label{eq:certificate}
\end{equation}
We use $\delta=0.01$ m and inflate computed radii for numerical tolerance. Coverage is checked on $\Omega_n$, independently of the planning region; finite completion also requires the fallback in Section IV.

\section{Reliable Reception for the Next Observation}
\subsection{The Exact Conditional Constraint}
The first normal observation gives the possible source set
\begin{equation}
 \mathcal F_1=\mathcal D\cap W_1\cap
 \{G:r_{\mathrm{near}}<\norm{G-A_1}\leq R_{\max}\}.
 \label{eq:prior}
\end{equation}
For a fixed $G\in\mathcal F_1$, the smallest radius compatible with the first reading is $\max\{R_{\min},\norm{G-A_1}\}$. Hence the exact region guaranteeing either a normal signal or a close-range flag is
\begin{equation}
 \mathcal C_{\mathrm{recv}}=
 \bigcap_{G\in\mathcal F_1}
 \B\bigl(G,\max\{R_{\min},\norm{G-A_1}\}\bigr).
 \label{eq:exact}
\end{equation}
The intersection is closed and convex. Close-range feedback counts as usable reception because it permits service at the current point.

\subsection{Three-Disk Reduction}
Enlarge $\mathcal F_1$ to the complete sector
\begin{equation}
 \widetilde{\mathcal F}_1=
 \{A_1+r e(\theta_1+\omega):0\leq r\leq R_{\max},\ |\omega|\leq\alpha\},
\end{equation}
where $e(\theta)=(\cos\theta,\sin\theta)^T$. Removing the source-domain and close-range restrictions makes the filter conservative.

\begin{proposition}
For $R_{\max}\geq R_{\min}>0$ and $0<\alpha<\pi/2$, replacing $\mathcal F_1$ by $\widetilde{\mathcal F}_1$ in \eqref{eq:exact} gives exactly
\begin{equation}
 \begin{split}
 \mathcal C_{\mathrm{safe}}={}&\B(A_1,R_{\min})\\
 &\cap\B(A_1+R_{\min}e(\theta_1-\alpha),R_{\min})\\
 &\cap\B(A_1+R_{\min}e(\theta_1+\alpha),R_{\min}).
 \end{split}
 \label{eq:three}
\end{equation}
Consequently $\mathcal C_{\mathrm{safe}}\subseteq\mathcal C_{\mathrm{recv}}$.
\end{proposition}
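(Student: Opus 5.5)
The plan is to prove the two inclusions between
\[
X:=\bigcap_{G\in\widetilde{\mathcal F}_1}\B\bigl(G,\max\{R_{\min},\norm{G-A_1}\}\bigr)
\]
and the right-hand side of \eqref{eq:three}. The key tool is a single scalar reformulation of each disk constraint. Write $q=Q-A_1$ for a test point $Q$. Parametrize $G=A_1+r e(\varphi)$ with $0\le r\le R_{\max}$ and $|\varphi-\theta_1|\le\alpha$, and set $\bar r=\max\{R_{\min},r\}$. Expanding the square gives
\[
\norm{Q-G}^2\le\bar r^2 \iff \norm{q}^2+r^2-2r\,q^Te(\varphi)\le\bar r^2 .
\]

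\emph{The inclusion $X\subseteq\mathcal C_{\mathrm{safe}}$.} This direction is immediate by choosing three particular sources, all of which lie in $\widetilde{\mathcal F}_1$ because $R_{\max}\ge R_{\min}$.
\begin{itemize}
\item $G=A_1$ (that is, $r=0$) has radius $\max\{R_{\min},0\}=R_{\min}$. It forces $Q\in\B(A_1,R_{\min})$.
\item $G=A_1+R_{\min}e(\theta_1\pm\alpha)$ has radius $R_{\min}$. These two choices force the other two disks in \eqref{eq:three}.
\end{itemize}

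\emph{The inclusion $\mathcal C_{\mathrm{safe}}\subseteq X$.} Let $Q\in\mathcal C_{\mathrm{safe}}$.
\begin{itemize}
\item The case $r=0$ is the first disk.
\item For $r>0$, we have $r^2\le\bar r^2$, so it suffices to show $2r\,q^Te(\varphi)\ge\norm{q}^2$, i.e.\ $q^Te(\varphi)\ge\norm{q}^2/(2r)$.
\item Since $\norm{q}^2/(2r)$ is nonincreasing in $r$, the case $0<r<R_{\min}$ is the delicate one. We need $q^Te(\varphi)\ge\norm{q}^2/(2r)$ there, which is strictly stronger than the bound at $r=R_{\min}$.
\end{itemize}
Here the cleaner route is convexity. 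For $r\le R_{\min}$ the radius is constantly $R_{\min}$, so we need $\max_G\norm{Q-G}\le R_{\min}$ over the small sector $S=\{A_1+re(\varphi):r\le R_{\min}\}$. This set is convex because $2\alpha<\pi$. The map $G\mapsto\norm{Q-G}$ is convex, so its maximum over $S$ is attained at an extreme point: $A_1$ or an arc point $A_1+R_{\min}e(\varphi)$.

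For $r\ge R_{\min}$, the condition reduces to $q^Te(\varphi)\ge\norm{q}^2/(2r)$. This is implied by the condition at $r=R_{\min}$, since $\norm{q}^2/(2r)\le\norm{q}^2/(2R_{\min})$. The whole reverse inclusion therefore reduces to the arc inequality
\[
q^Te(\varphi)\ge \frac{\norm{q}^2}{2R_{\min}}\qquad\text{for all }|\varphi-\theta_1|\le\alpha,
\]
given that it holds at the two endpoints $\varphi=\theta_1\pm\alpha$ (that is exactly membership in the two side disks).

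\emph{Main obstacle: the arc inequality.} This step is where I expect the difficulty. Write $q^Te(\varphi)=\norm{q}\cos(\varphi-\psi)$ with $\psi$ the direction of $q$; if $q=0$ the claim is trivial. The endpoint values are $\ge0$, so both endpoints lie in the closed half-circle $[\psi-\pi/2,\psi+\pi/2]$. The arc joining them has length $2\alpha<\pi$, so it cannot pass through $\psi+\pi$, where cosine attains its minimum; such an arc would need length exceeding $\pi$. Cosine is unimodal with its unique minimum at $\psi+\pi$, so its minimum over this arc is attained at an endpoint. This gives the inequality on the whole arc, and completes $\mathcal C_{\mathrm{safe}}\subseteq X$.

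\emph{The consequence.} Finally, $\mathcal F_1\subseteq\widetilde{\mathcal F}_1$, since we only dropped the domain and close-range restrictions. The radius assigned to each $G$ is the same function in both intersections. An intersection over a larger index set is smaller, so $\mathcal C_{\mathrm{safe}}=X\subseteq\mathcal C_{\mathrm{recv}}$.
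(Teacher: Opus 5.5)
Your proof is correct and follows essentially the same route as the paper. You reduce the constraints with $r\le R_{\min}$ to the apex and the arc $r=R_{\min}$ by convexity, show that constraints with $r\geq R_{\min}$ are implied by the one at $r=R_{\min}$ (through monotonicity of $\norm{q}^2/(2r)$), and reduce the arc to its two endpoints; the only real difference is that last step, where you use the polar form $\norm{q}\cos(\varphi-\psi)$ and the fact that an arc of length $2\alpha<\pi$ with nonnegative endpoint values cannot contain $\psi+\pi$, whereas the paper first derives $u\geq0$ from the two endpoint conditions and then bounds $h_\omega\geq u\cos\alpha-|v|\sin\alpha$ termwise.
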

\begin{IEEEproof}
Write $A_2=A_1+u e_1+v n_1$ and $h_\omega=u\cos\omega+v\sin\omega$. For a sector point at radius $r$,
\begin{equation}
 \norm{A_2-G}^2=r^2+u^2+v^2-2r h_\omega.
 \label{eq:distance}
\end{equation}
For $0\leq r\leq R_{\min}$, this expression is convex in $r$; its maximum occurs at an endpoint. The endpoints require $u^2+v^2\leq R_{\min}^2$ and $u^2+v^2\leq2R_{\min}h_\omega$. For $r\geq R_{\min}$, the reception inequality reduces to $u^2+v^2\leq2r h_\omega$. The endpoint condition already ensures $h_\omega\geq0$, so larger $r$ cannot tighten it.

Necessity of the two angular-endpoint conditions implies $u\geq0$. For such $u$, $\cos\omega\geq\cos\alpha$ and $v\sin\omega\geq-|v|\sin\alpha$, with equality at one endpoint. Therefore
\begin{equation}
 \min_{|\omega|\leq\alpha}h_\omega
 =u\cos\alpha-|v|\sin\alpha.
\end{equation}
Checking the sector apex and the two points at $r=R_{\min}$, $\omega=\pm\alpha$, is thus necessary and sufficient. These are precisely the three disks in \eqref{eq:three}. Enlarging the source set can only shrink its intersection of reception disks.
\end{IEEEproof}

The implementation uses the equivalent two-inequality test
\begin{equation}
 \begin{aligned}
 u^2+v^2&\leq R_{\min}^2,\\
 u^2+v^2&\leq2R_{\min}(u\cos\alpha-|v|\sin\alpha).
 \end{aligned}
 \label{eq:filter}
\end{equation}
The test is exact for the enlarged sector and conservative for $\mathcal F_1$. Constraints beyond $R_{\min}$ are redundant; $R_{\max}$ affects the source region but not the filter (Fig.~\ref{fig:safe}).

\begin{figure}[t]
 \centering\includegraphics[width=0.80\columnwidth]{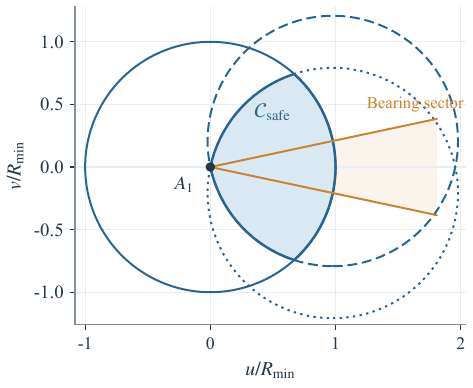}
 \caption{The three-disk reception filter in normalized first-bearing coordinates. The sector half-angle is enlarged to $12^\circ$ for visibility; the numerical experiment uses $1^\circ$. The shaded set is the intersection of the three blue disks.}
 \label{fig:safe}
\end{figure}

\section{Conditional Time Optimization}
\subsection{Success Requirement and Time Objective}
Let $K$ index the final attempted reading, including fallback attempts, and let $C_K$ be the terminal service point. We measure time from the end of the first bearing reading:
\begin{equation}
 \begin{aligned}
 L_\pi&=\sum_{i=1}^{K-1}\norm{A_{i+1}-A_i}+\norm{C_K-A_K},\\
 T_\pi&=L_\pi/v_{\mathrm{m}}+\tau_m(K-1)+\tau_t.
 \end{aligned}
 \label{eq:time}
\end{equation}
Here $\tau_m$ is the reading duration and $\tau_t$ the terminal inspection and removal duration. Close-range completion has $C_K=A_K$. All travel and attempted readings, including silent fallback readings, are charged. Computation time is excluded.

We specify an area-uniform distribution on $\mathcal F_1$ rather than infer a posterior. Sampling uses uniform $\omega\in[-\alpha,\alpha]$ and uniform $r^2\in[r_{\mathrm{near}}^2,R_{\max}^2]$, rejecting positions outside $\mathcal D$. Future errors follow Section II. Normal sensing guarantees reception for every compatible $R$; fallback uses only close-range feedback. Thus execution time needs no distribution for $R$.

Let $\xi\in\Xi$ comprise a source, compatible radius, and bounded error sequence consistent with the first reading. Define $S_\pi(q;\xi)=1$ for finite completion with reliable terminal service within $\rho$ of the source, and zero otherwise. With fixed continuation $\pi$, success is a hard constraint and expected time ranks feasible points:
\begin{equation}
 \begin{aligned}
 &\min_{q\in\mathcal C_{\mathrm{safe}}} f_\pi(q),\quad
 f_\pi(q)=\mathbb E[T_\pi(q;\xi)],\\
 &\text{subject to }S_\pi(q;\xi)=1\quad\text{for every }\xi\in\Xi.
 \end{aligned}
 \label{eq:objective}
\end{equation}
The expectation uses the assumed distributions; success covers every compatible scenario by Proposition 2. Every simulated mission and its full cost are retained. Only the second point is optimized.

\subsection{Monte Carlo Evaluation and Search}
The sensor moves to $q$ and takes a reading. A close-range flag permits service there; otherwise the bearing updates $\Omega_n$. If \eqref{eq:stop} holds, it moves to $C_n$ for service; otherwise it measures at another admissible point. No remaining candidate or reaching the reading limit triggers fallback. Equation~\eqref{eq:time} counts all moves, readings, and terminal service.

The expectation in \eqref{eq:objective} averages complete mission times over source positions and future errors. Monte Carlo simulation generates $M$ missions from the assumed distributions \cite{owen}; their mean completion time is the sample-average approximation \cite{saa}:
\begin{equation}
 \widehat f_M(q)=\frac1M\sum_{j=1}^M T_\pi(q;G^{(j)},\epsilon^{(j)}).
 \label{eq:saa}
\end{equation}
Each $j$ is a complete mission, including further readings and fallback when needed. Simulation estimates time; geometry establishes success.

Candidates share source and error draws for paired comparisons \cite{crn}. We filter a $20$ m grid on $0\leq u\leq1000$, $-880\leq v\leq880$ by \eqref{eq:filter}, then refine a $\pm20$ m neighborhood of the best point and its reflection using a $2$ m grid. The reference case has 3021 coarse points and 882 refinement records, totaling 3885 distinct locations. Selection minimizes \eqref{eq:saa} on this finite set, without a continuous optimality guarantee.

We validate the fixed point on $N$ independent missions, reporting $\bar T=N^{-1}\sum_jT_j$ and the approximate 95\% confidence interval $\bar T\pm1.96s_T/\sqrt N$, where $s_T$ is the sample standard deviation \cite{owen}. For paired designs, the formula applies to within-mission time differences. These intervals quantify sampling error, not model uncertainty.

\subsection{Continuation Policy}
The policy maintains $\Omega_n$ and a bounded planning polygon $\widehat{\mathcal F}_n$ containing all feasible sources. Initially, bearing half-planes intersect outer tangent approximations of $\mathcal D$ and the first reception disk, with normals spaced by $0.25^\circ$; the close-range exclusion is relaxed to $s_1\geq r_{\mathrm{near}}\cos\alpha$. New wedges clip this polygon, while stopping is checked on $\Omega_n$.

If further sensing is needed, let $\widehat C_n$ be the enclosing-circle center of the planning polygon. In first-bearing coordinates, generate six locations
\begin{equation}
 q=\widehat C_n\pm b n_1,\qquad b\in\{40,80,160\}\ \mathrm{m}.
 \label{eq:continuation}
\end{equation}
Exclude observed points. A candidate is admissible if it lies in a three-disk safe region from any previous normal reading, or if all planning-polygon vertices lie within $R_{\min}$ of it. Convexity extends the latter test to the whole polygon; either test guarantees reception.

Rank candidates using three representative sources: $\widehat C_n$ and its midpoints with the minimum- and maximum-$u$ polygon vertices. Append their hypothetical noiseless bearings to the observed constraints. Let $\widetilde r(q)$ be the largest resulting enclosing-circle radius, and define the estimated travel time $\ell(q)=(\norm{A_n-q}+\norm{q-\widehat C_n})/v_{\mathrm{m}}$. In the reported implementation, the score is
\begin{equation}
 J(q)=\begin{cases}
 \ell(q),&\widetilde r(q)\leq\rho-\delta,\\
 10^6+\widetilde r(q)+10^{-4}\ell(q),&\text{otherwise}.
 \end{cases}
 \label{eq:heuristic}
\end{equation}
The score uses meters and seconds. Its three scenarios guide ranking, not worst-case certification. Actual readings update $\Omega_n$; service still requires \eqref{eq:stop}.

All cases share the same offsets, representative sources, and reading limit. After eight readings following the first, or when candidates run out, fallback scans a $3$ m square lattice over the planning rectangle in serpentine order. Coordinate bounds are rounded outward and padded by one lattice spacing. Only the close-range flag affects this scan; all other feedback is ignored and the planning region is unchanged. Every attempt incurs full cost; incomplete simulations are errors, never omitted.

\subsection{Completion Guarantee and Evaluation Metrics}
\begin{proposition}
Assume exact geometry, bounded bearing errors, accurate unobstructed motion with $v_{\mathrm{m}}>0$, reliable feedback and terminal service, and $0<r_{\mathrm{near}}\leq\rho$. If the planning polygon contains the true source and the fallback lattice spacing $h$ satisfies $h\sqrt2/2\leq r_{\mathrm{near}}$, the specified policy satisfies $S_\pi(q;\xi)=1$ for every $q\in\mathcal C_{\mathrm{safe}}$ and $\xi\in\Xi$.
\end{proposition}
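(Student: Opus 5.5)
We argue by following a single mission for a fixed $q\in\mathcal C_{\mathrm{safe}}$ and $\xi\in\Xi$, and show that every branch of the policy ends in finite time with terminal service at a point within $\rho$ of $G$. The mission has three phases: the second reading at $q$, the normal continuation readings (at most eight after the first), and the fallback lattice scan. For each phase we check two things: every attempted normal reading yields either a bearing or a close-range flag, so the branch taken is well defined; and every exit to service is certified.

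\emph{Reception during normal sensing.} The first reading places $G$ in $\mathcal F_1\subseteq\widetilde{\mathcal F}_1$, and $R\geq\max\{R_{\min},\norm{G-A_1}\}$. Proposition 1 gives $q\in\mathcal C_{\mathrm{recv}}$, hence $\norm{G-q}\leq R$. The reading at $q$ is therefore either a normal bearing ($r_{\mathrm{near}}<\norm{G-q}\leq R$) or a close-range flag.

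For a continuation candidate there are two admissibility tests.
\begin{itemize}
\item[(a)] If the candidate lies in the three-disk region built from an earlier normal reading $A_i$, repeat the argument of Proposition 1. Since $G\in W_i$ with $\norm{G-A_i}\leq R$, and the bound $R\geq\max\{R_{\min},\norm{G-A_i}\}$ holds because a normal reading was received at $A_i$, $G$ lies in the sector about $A_i$ with radius $R$. The sector computation of Proposition 1 applies with $R$ in place of $R_{\max}$, since the filter never used $R_{\max}$.
\item[(b)] If every planning-polygon vertex lies within $R_{\min}$ of the candidate, then by convexity the whole polygon lies in $\B(\cdot,R_{\min})$. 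The polygon contains $G$ by hypothesis; the policy preserves this because wedge clipping only intersects with sets containing $G$. Hence $\norm{G-\text{candidate}}\leq R_{\min}\leq R$.
\end{itemize}
In both cases the reading is normal or a close-range flag, never silent.

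\emph{Certified exits.} A close-range flag at $A_K$ gives $\norm{G-A_K}\leq r_{\mathrm{near}}\leq\rho$, so service at $C_K=A_K$ succeeds. A normal bearing keeps $G\in\Omega_n$, since each retained wedge contains $G$ by \eqref{eq:bearing}. If \eqref{eq:stop} fires, \eqref{eq:certificate} gives $\norm{G-C_n}\leq\rho-\delta<\rho$; the inflation of computed radii absorbs numerical error. Normal sensing is finite: there are at most eight readings, or fewer if the candidates run out, after which fallback is triggered.

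\emph{Fallback termination.} This is the step I expect to need the most care. The scan covers a $3$ m lattice, with spacing $h$ in general, laid over the planning rectangle. That rectangle is the bounding box of the polygon, with bounds rounded outward and padded by one spacing, so it contains $G$. Every point of the rectangle lies in some lattice cell whose four corners are scan points. The farthest a point of a square cell of side $h$ can be from its nearest corner is $h\sqrt2/2$, attained at the cell center. Hence some lattice point $p$ satisfies $\norm{G-p}\leq h\sqrt2/2\leq r_{\mathrm{near}}$, and at $p$ the close-range flag is raised. Its reliability does not depend on $R$, because $r_{\mathrm{near}}\le R$: a close-range event requires only $d\le r_{\mathrm{near}}$ by the sensing model.

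The scan is serpentine over a finite lattice, and the motion is accurate with $v_{\mathrm m}>0$. It therefore reaches $p$, or an earlier flagged point, after finitely many steps. Service there succeeds because $r_{\mathrm{near}}\leq\rho$, and $T_\pi<\infty$ since $K$ and $L_\pi$ are finite. Combining the three phases gives $S_\pi(q;\xi)=1$.

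The main subtlety is confirming that the padding and outward rounding really make the lattice cells cover the entire rectangle, so that no boundary strip is missed. We must also confirm that ignoring non-flag feedback during the scan cannot cause the scan to stop before reaching $p$.
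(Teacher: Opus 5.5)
Your proposal is correct and takes essentially the same route as the paper's proof. Both establish reception at $q$ via Proposition 1 and at later points via the two admissibility tests, preserve $G$ under wedge clipping, certify exits by the enclosing-circle stop or by $r_{\mathrm{near}}\leq\rho$, cap normal sensing at eight readings, and finish with a padded-lattice covering argument for the fallback scan; you simply give more detail, such as reapplying Proposition 1 with the true $R$ at later normal readings and making the $h\sqrt2/2$ cell-corner bound explicit.
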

\begin{IEEEproof}
The reception and admissibility tests guarantee usable feedback during normal sensing. Every normal wedge contains $G$, so clipping preserves it in both the angular region and the bounded planning polygon. A stop by \eqref{eq:stop} is successful by \eqref{eq:certificate}; a close-range stop is successful since $r_{\mathrm{near}}\leq\rho$. Otherwise normal sensing ends after at most eight additional readings. The planning rectangle is bounded and contains $G$. Rounding and padding retain a nearest lattice point to $G$, so the finite scan reaches within $h\sqrt2/2$ of $G$, triggering close-range completion. All moves and readings are finite, hence so is completion time.
\end{IEEEproof}

We report $\widehat p_{\mathrm{succ}}=N^{-1}\sum_jS_\pi(q;\xi_j)$ and $\widehat p_2=N^{-1}\sum_j\mathbf1\{K_j=2\}$. The latter means no third reading, though terminal travel may remain. All branches contribute to mean time. Algorithm~\ref{alg:policy} summarizes the method.

\begin{figure}[t]
\begin{minipage}{\columnwidth}
\refstepcounter{algorithm}\label{alg:policy}
\hrule\vspace{3pt}
\textbf{Algorithm \thealgorithm: Reception-Constrained Selection and Completion}
\vspace{2pt}\hrule\vspace{3pt}
\begin{algorithmic}[1]
\REQUIRE First reading, source domain, sensing parameters, and candidate grids.
\STATE Construct $\Omega_1$, $\widehat{\mathcal F}_1$, and $\mathcal C_{\mathrm{safe}}$.
\STATE Fix continuation $\pi$; evaluate every feasible grid point on the same $M$ complete scenarios.
\STATE Select the candidate with the lowest sample mean time; validate on independent scenarios.
\STATE Move to the selected second point and obtain feedback.
\IF{a close-range flag is received}
 \STATE Execute terminal service at the current point.
\ELSE
 \STATE Update both regions from the real bearing; compute the enclosing circle of $\Omega_n$.
 \STATE If $r_n\leq\rho-\delta$, move to $C_n$ and execute service.
 \STATE Otherwise choose an admissible point using \eqref{eq:heuristic}; continue sensing, allowing at most eight readings after the first.
 \STATE If no candidate remains or the limit is reached, scan the padded rectangle on the finite lattice until close-range completion.
\ENDIF
\end{algorithmic}
\vspace{3pt}\hrule
\end{minipage}
\end{figure}

\section{Numerical Study}
\subsection{Benchmark and Reproducibility}
Table~\ref{tab:parameters} gives the parameters. The reference uses $A_1=(1500,0)$ m and bearing $0.4^\circ$. Training and independent validation each use 5000 missions (seeds 20260912 and 20260913). These single-source simulations use the area-uniform conditional distribution; they are not official multi-source tests.

\begin{table}[t]
\caption{Simulation Parameters}
\label{tab:parameters}
\centering\small\setlength{\tabcolsep}{4pt}
\begin{tabular}{llll}
\toprule
Parameter & Value & Parameter & Value\\
\midrule
$a$ & 1800 m & $r_{\mathrm{near}}$ & 5 m\\
$R_{\min}$ & 1000 m & $\rho$ & 20 m\\
$R_{\max}$ & 1500 m & $\delta$ & 0.01 m\\
$\alpha$ & $1^\circ$ & $v_{\mathrm{m}}$ & 5 m/s\\
$\tau_m$ & 5 s & $\tau_t$ & 5 s\\
Training $M$ & 5000 & Validation $N$ & 5000\\
Coarse grid & 20 m & Fine grid & 2 m\\
\bottomrule
\end{tabular}
\end{table}

The package supplies code, samples, and candidate values, reproducing the search and validation mean. Checks verify \eqref{eq:time}, compare the reception filter against dense sector constraints at 2000 random points, and confirm the acute-triangle counterexample. Proposition 1, rather than these sampled checks, establishes reception.

\begin{figure}[t]
 \centering\includegraphics[width=0.92\columnwidth]{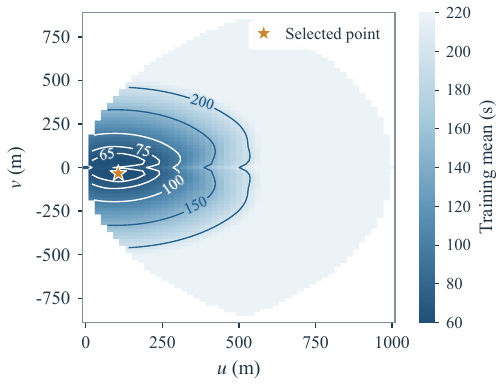}
 \caption{Coarse-grid sample-average completion time for 5000 training missions. The color scale is capped at 220 s; labeled contours interpolate grid values without smoothing. The star marks the refined selected point; validation uses independent missions.}
 \label{fig:objective}
\end{figure}

\begin{figure}[t]
 \centering\includegraphics[width=0.98\columnwidth]{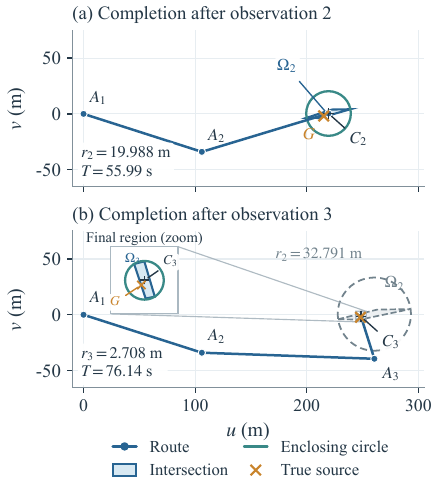}
 \caption{Held-out trajectories and wedge intersections. (a) $r_2=19.988$ m permits service at $C_2$. (b) $r_2=32.791$ m requires $A_3$, giving $r_3=2.708$ m. Blue shading shows the final intersection; the dashed gray outline shows the earlier region. The inset enlarges $\Omega_3$.}
 \label{fig:routes}
\end{figure}

\begin{figure*}[t]
 \centering\includegraphics[width=0.94\textwidth]{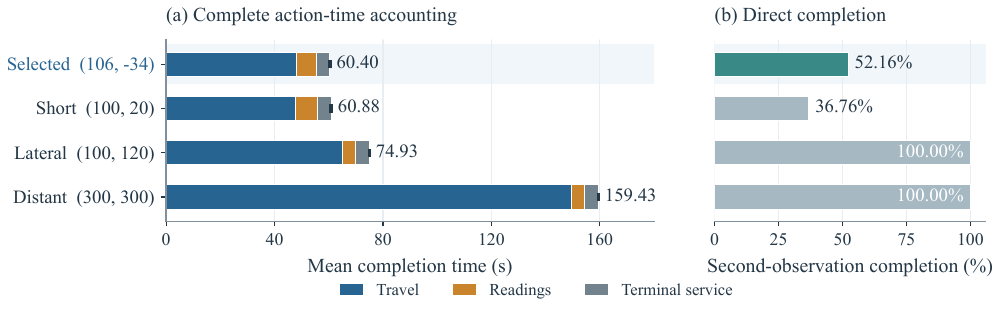}
 \caption{Reference-case validation on the same 5000 missions. (a) Mean travel, reading, and terminal-service costs; black caps give approximate 95\% intervals for total mean time. Coordinates are in meters. (b) Proportion completed without a third reading. Final empirical success is 100\% for every choice, including missions requiring further readings.}
 \label{fig:accounting}
\end{figure*}

\begin{figure}[t]
 \centering\includegraphics[width=0.92\columnwidth]{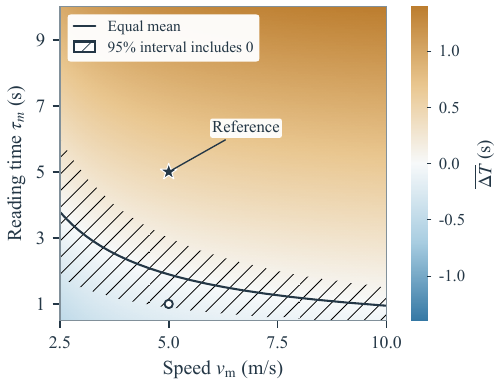}
 \caption{Fixed-trajectory cost sensitivity: positive $\overline{\Delta T}$ favors the selected point over the short-travel design. The zero contour separates sample-mean rankings; hatching denotes pointwise 95\% paired intervals containing zero. The star marks $(v_{\mathrm{m}},\tau_m)=(5,5)$ and the open circle $(5,1)$. All 5000 missions and sensing decisions are fixed.}
 \label{fig:cost}
\end{figure}

\begin{figure}[t]
 \centering\includegraphics[width=0.92\columnwidth]{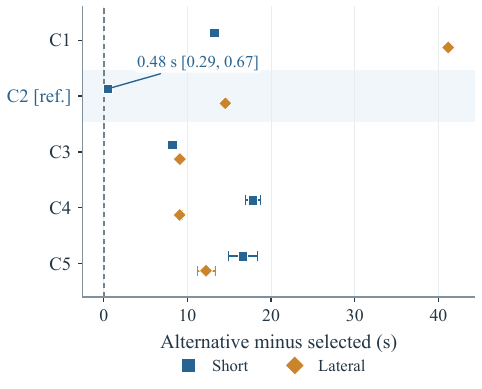}
 \caption{Paired completion-time differences in the five geometries of Table~\ref{tab:geometries}, each with 5000 training and 5000 independent validation missions. Positive alternative-minus-selected differences favor selection; bars give approximate pointwise 95\% intervals. All choices share the same continuation rule.}
 \label{fig:geometry}
\end{figure}

\subsection{Selected Point and Complete Time Accounting}
Selection gives $(106,-34)$ m locally, or $A_2^*=(1606.235,-33.259)$ m globally, with an initial move of 111.32 m. Training and validation means are 60.16 and 60.40 s; validation standard error is 0.223 s and its 95\% interval is [59.96, 60.84] s. Fig.~\ref{fig:objective} shows the coarse objective and Supplementary Fig.~S1 the refinement.

All 5000 missions succeed without fallback: $\widehat p_{\mathrm{succ}}=100\%$, with maximum terminal distance 17.78 m below $\rho=20$ m. Of these, 52.16\% need no third reading; 47.84\% need one more. Mean cost is 48.007 s of travel (240.04 m), 7.392 s of readings (1.4784 attempts), and 5 s of service. Fig.~\ref{fig:routes} shows both branches. True source positions generate feedback and verify distance; the policy uses only observed feedback.

\subsection{Paired Comparisons and the Travel--Sensing Tradeoff}
Fig.~\ref{fig:accounting} compares the selected point with three prescribed alternatives under identical reception and continuation rules, not reproductions of published algorithms. All four complete every validation mission. The lateral point $(100,120)$ m needs no third reading but averages 74.93 s: selection saves 14.53 s (19.39\%; paired 95\% interval [14.28, 14.78] s). The distant point $(300,300)$ m averages 159.43 s.

The short point $(100,20)$ m averages 60.88 s. Selection saves 0.478 s (0.79\%; paired 95\% interval [0.289, 0.668] s), adding 0.292 s of travel but saving 0.770 s of readings. Direct completion rises from 36.76\% to 52.16\%. The modest advantage depends on the cost of further sensing.

\subsection{Monte Carlo Precision and Cost Sensitivity}
Validation-prefix means and intervals are supplied in the reproducibility package. These overlapping samples illustrate precision, not optimization convergence.

We hold the simulated trajectories and sensing decisions fixed and recalculate \eqref{eq:time} for alternative speeds and reading times. Let $\Delta T$ denote the short point's time minus the selected point's time. The saved paired missions give the sample-mean difference
\begin{equation}
 \overline{\Delta T}=-1.4589/v_{\mathrm{m}}+0.1540\tau_m.
 \label{eq:sensitivity}
\end{equation}
Fig.~\ref{fig:cost} covers $2.5\leq v_{\mathrm{m}}\leq10$ m/s and $0.5\leq\tau_m\leq10$ s. Selection has lower sample mean above $v_{\mathrm{m}}\tau_m=9.473$ m. Hatching marks settings whose pointwise paired 95\% interval includes zero. At $v_{\mathrm{m}}=5$ m/s and $\tau_m=1$ s, $\overline{\Delta T}=-0.138$ s favors the short point, but the paired interval for $\Delta T$, [$-0.292$, 0.017] s, includes zero. Supplementary Fig.~S2 shows the fixed-speed slice. Routes and decisions are unchanged; neither selection nor continuation is reoptimized.

\subsection{Additional Initial Geometries}
We repeat selection in four additional geometries with unchanged parameters, grids, and continuation (Table~\ref{tab:geometries}). Each case uses 5000 training and 5000 independent validation missions. Along the first bearing, feasible sources extend to approximately 100, 300, 800, 995, and 1500 m from $A_1$ in C1--C5.

\begin{table}[t]
\caption{Initial Geometries and Independent Validation}
\label{tab:geometries}
\centering\small\setlength{\tabcolsep}{2.7pt}
\begin{tabular}{lrrrrr}
\toprule
Case & $r$, m & $\theta_1$, deg & $(u^*,v^*)$, m & $\bar T$, s & $\widehat p_2$, \%\\
\midrule
C1 & 1700 & 0 & (34,12) & 25.33 & 98.40\\
C2 & 1500 & 0.4 & (106,-34) & 60.40 & 52.16\\
C3 & 1000 & 0 & (212,54) & 133.74 & 18.90\\
C4 & 1500 & 90 & (292,74) & 162.85 & 18.20\\
C5 & 1500 & 180 & (436,-116) & 245.27 & 13.38\\
\bottomrule
\end{tabular}
\par\vspace{2pt}\footnotesize $A_1=(r,0)$ m; bearings use the global $+x$ axis. Selected points use first-bearing coordinates. Each row has 5000 successful validation missions; $\widehat p_2$ is direct completion, not final success.
\end{table}

Fig.~\ref{fig:geometry} compares each reselected point with the fixed short and lateral designs. All paired 95\% intervals lie above zero. The smallest gain is 0.478 s against the short point in C2; lateral-point gains range from 9.08 to 41.18 s. Supplementary Fig.~S3 reports absolute means. These results apply to the specified geometries and designs.

All 25,000 selected-point validation missions succeed without fallback, with maximum terminal distance 19.20 m. Direct completion ranges from 13.38\% to 98.40\%; some C3--C5 missions need a fourth reading. In C5, 36,442 of 19,425,000 training candidate--mission evaluations use fallback, with full costs retained. This is not the selected policy's fallback probability.

Independent 100,000-mission checks in each of C2 and C5 give short-minus-selected differences of 0.502 s [0.459, 0.544] and 16.295 s [15.920, 16.670], respectively (paired 95\% intervals). Both policies complete every mission without fallback: 400,000 executions in total. No observed fallback does not imply zero probability.

Full search and validation took 79--186 s per geometry with eight CPU threads. These offline costs are excluded from $T_\pi$; real-time decision latency remains untested.

\subsection{Selection-Objective Ablation}
The ablation uses the same 3021 coarse points, 5000 training missions, and continuation for three objectives: mean action time, total travel, and additional readings. Reading-count ties use lower travel; travel ties use fewer readings; remaining ties use lexicographic $(u,v)$ order. All three exclude refinement and use the same 5000 held-out missions per geometry.

\begin{table}[t]
\caption{Objective Ablation: Validation Mean Time (s)}
\label{tab:ablation}
\centering\small
\begin{tabular}{lrrr}
\toprule
Case & Time objective & Travel objective & Reading objective\\
\midrule
C1 & 26.40 & 26.40 & 26.40\\
C2 & 60.55 & 60.90 & 66.33\\
C3 & 133.78 & 133.78 & 281.45\\
C4 & 162.92 & 162.92 & 285.90\\
C5 & 245.32 & 245.63 & 310.92\\
\bottomrule
\end{tabular}
\end{table}

Time and travel objectives select the same point in C1, C3, and C4 (Table~\ref{tab:ablation}). In C2 and C5, travel-only selection adds 0.348 s [0.141, 0.554] and 0.319 s [0.102, 0.535], respectively (paired 95\% intervals). Reading-only selection reduces readings but raises mean time in C2--C5, with positive paired intervals. Every choice completes all missions. This isolates the objective on a common finite grid, not superiority over published methods.

\section{Discussion and Conclusion}
Reception, enclosing-circle coverage, and finite fallback guarantee completion under the model. Selected points reduce mean time against two fixed designs in five geometries. The ablation isolates reading costs; cheaper readings can reverse the short-versus-selected sample-mean ranking, though its paired interval includes zero.

Uniform conditional sources and bounded independent errors are ideal assumptions. Navigation error, obstacles, multipath outliers, source motion, and directional reception remain untested. Finite grids and heuristic continuation restrict optimization. Approximate intervals may miss rare fallback costs; excluded computation prevents claims of lower end-to-end time. Published-method comparisons, other priors, and field tests remain necessary. Future work should exploit domain constraints and optimize selection jointly with continuation.


\end{document}